\documentclass[reqno,12pt]{amsart}
\usepackage[T1]{fontenc}
\usepackage[utf8]{inputenc}
\usepackage[english]{babel}
\usepackage{amssymb,amsmath,amsthm,amscd,latexsym,amsfonts,xcolor,enumerate,hyperref,comment,longtable,cleveref}

\usepackage{times}
\usepackage{cite}
\usepackage{pdflscape}
\usepackage[mathcal]{euscript}
\usepackage{tikz}
\usepackage{hyperref}
\usepackage{cancel}
\usepackage{stmaryrd}
\usepackage{dsfont}

\numberwithin{equation}{section}

\usetikzlibrary{arrows}

\DeclareMathOperator{\gr}{gr}

\usepackage[a4paper,top=3cm, bottom=3cm, left=3cm, right=3cm]{geometry}

\makeatletter
\@namedef{subjclassname@2020}{%
  \textup{2020} Mathematics Subject Classification}
\makeatother

\newtheorem{pr}[subsection]{Proposition}

\newtheorem{teo}[subsection]{Theorem}

\theoremstyle{definition}
\newtheorem{de}[subsection]{Definition}

\theoremstyle{remark}

\newtheorem{remark}[subsection]{Remark}

\usepackage{stmaryrd}

\begin{document}

\title[Naturally graded nilpotent associative algebras of nilindex $n-3$]
{Naturally graded nilpotent associative algebras of nilindex $n-3$}

	\author{I.A.Karimjanov}
\address{Department of Mathematics, Andijan State University, 129, Universitet Street, Andijan,
170100, Uzbekistan,
\newline and\newline
V.I.Romanovskiy Institute of Mathematics, Uzbekistan Academy of Sciences, Univesity Street, 9, Olmazor district, Tashkent, 100174, Uzbekistan,
\newline
		and\newline
		Saint-Petersburg State University, Saint-Petersburg, Russia.}

\email{iqboli@gmail.com}

\thanks{This work was partially supported by RSF 22-71-10001.}

\begin{abstract} In the present paper, we give the classification of a subclass of n-dimensional naturally graded associative algebras with nilindex $n-3$. The subclass has the characteristic sequence $C(\mathcal{A})=(n-3,2,1)$. The result completes the classification of naturally graded associative algebras with nilindex $n-3$ for $n>6$.
\end{abstract}

\subjclass[2020]{16S50, 16W50}
\keywords{associative algebras, nilpotent, naturally graded, characteristic sequence, left multiplication operator}

\maketitle

\section{Introduction}

In general, the classification of finite-dimensional nilpotent associative algebras up to isomorphism is very complicated and still an open problem.  Due to the difficulties of obtaining a complete classification there are two options: to classify all nilpotent algebras of small dimension or arbitrary dimensional algebras with given some properties. A lot of progress has been made in the classifications of low-dimensional nilpotent associative algebras. Much research has been devoted to classification of low-dimensional associative algebras \cite{Haz, kar, Eick, Kr,  Eick2, Graaf, Maz1, Maz2, kay, Poo}.

 Finite-dimensional nilpotent associative algebras with different properties have been investigated by many authors \cite{sil,sm,el}. We remark that $n-$dimensional nilpotent associative algebra has a nilpotency index less or equal $n$. A nilpotent associative algebras which has a maximum nilindex are one-generated and called null-filiform algebras. There is unique such type associative algebra up to isomorphism for fixed dimension.

The next step is classification of nilpotent associative algebras with nilindex $n-1$ and $n-2$. Such type algebras called filiform\cite[see Proposition 3.4]{kar1} and quasi-filiform algebras respectively. The complete classification of filiform and quaisi-filiform associative algebras up to isomorphism have been done in\cite{kar1,kar2}.


The aim of the present paper is to classify naturally graded $n-$ dimensional associative algebras of nilindex $n-3$. It is necessary to consider the next characteristic sequences: $(n-3,3)$, $(n-3,1,1,1)$ and $(n-3,2,1)$ for the classification of naturally graded associative algebras of nilindex $n-3$. The classification of naturally graded associative algebras with characteristic sequences $(n-3,3)$ and $(n-3,1,1,1)$ has been done in \cite{kar3,kar4}. Thus, it is sufficient to consider naturally graded associative algebras with characteristic sequence $(n-3,2,1)$ to complete the study.

In this paper we present classification up to isomorphism naturally graded $n-$ dimensional associative algebras with characteristic sequence of $(n-3,2,1)$. The result completes the classification of naturally graded associative algebras with nilindex $n-3$ for $n>6$. Similar results for Leibniz and Zinbiel algebras were obtained in the works \cite{red, ad}. Actually, the classification of naturally graded nilpotent algebras with nilindex less or equal $n-3$ have been completed for the cases Lie, Leibniz and Zinbiel algebras. Throughout the paper algebras are finite-dimensional over the field of the complex numbers.

\section{Preliminaries}\label{S:prel}

In this section we give necessary definitions and preliminary results.

For an associative algebra $\mathcal{A}$, over field of complex number, we define series:
\[
\mathcal{A}^1=\mathcal{A}, \qquad \ \mathcal{A}^{i+1}=\mathcal{A}^i\mathcal{A}, \qquad i\geq 1.
\]

If $\mathcal{A}^{s}=0$ for some $s \in \mathbb{N}$ then an algebra $\mathcal{A}$ is called \emph{nilpotent}. The integer $k$ satisfying $\mathcal{A}^{k}\neq0$ and $\mathcal{A}^{k+1}=0$ is called the \emph{index of nilpotency} or \emph{nilindex} of $\mathcal{A}$.

 Given a finite-dimensional nilpotent associative algebra $\mathcal{A}$, put
$\mathcal{A}_i=\mathcal{A}^i/\mathcal{A}^{i+1}, \ 1 \leq i\leq k$, and $\gr \mathcal{A} = \mathcal{A}_1 \oplus
\mathcal{A}_2\oplus\dots \oplus \mathcal{A}_{k}$, where $k$ is nilindex of $\mathcal{A}$. Using $\mathcal{A}^i\mathcal{A}^j\subseteq \mathcal{A}^{i+j}$, it is easy to establish that $\mathcal{A}_i\mathcal{A}_j\subseteq \mathcal{A}_{i+j}$ and we obtain the graded algebra $\gr \mathcal{A}$. If $\gr \mathcal{A}$ and $\mathcal{A}$ are isomorphic,
denoted by $\gr \mathcal{A}\cong \mathcal{A}$, we say that the algebra $\mathcal{A}$ is naturally
graded.

For any element $x$ of $\mathcal{A}$ we define the  left multiplication operator as
\[L_x \colon  \mathcal{A} \rightarrow \mathcal{A},  \quad z \mapsto xz, \ z\in\mathcal{A}.\]

Let us $x\in\mathcal{A}\setminus\mathcal{A}^2$ and for the nilpotent left multiplication operator $L_x$, define the decreasing sequence $C(x)=(n_1,n_2, \dots, n_k)$ that consists of the dimensions of the Jordan blocks of the operator $L_x$. Endow the set of these sequences with the lexicographic order, i.e. $C(x)=(n_1,n_2, \dots, n_k)\leq C(y)=(m_1,m_2, \dots, m_s)$ means that there is an $i\in\mathbb{N}$ such that $n_j=m_j$ for all $j<i$ and $n_i<m_i$.

\begin{de} The sequence $C(\mathcal{A})=\max_{x\in\mathcal{A}\setminus\mathcal{A}^2}C(x)$ is called the characteristic sequence of the algebra $\mathcal{A}$.
\end{de}

\section{Main result} \label{S:fil}

Let $\mathcal{A}$ is $n$ - dimensional associative algebra with characteristic sequence $C(\mathcal{A})=(n_1,n_2,\dots,n_k)$. Then there exists
a basis $\{e_1,e_2,\dots, e_{n}\}$ such that $e_1 \in \mathcal{A}\backslash\mathcal{A}^2$ and the matrix of the operator of left multiplication by an element $e_1$ has the form
\[L_{e_1,\sigma}=\begin{pmatrix}
J_{n_{\sigma(1)}}\\
&J_{n_{\sigma(2)}}\\
&&\ddots\\
&&&J_{n_{\sigma(k)}}
\end{pmatrix}\]
where $\sigma(i)$ belongs to $\{1,2,\dots,k\}$.

By a suitable permutation of basis elements, we can guarantee that
\[n_{\sigma(2)}\geq n_{\sigma(3)}\geq \dots \geq n_{\sigma(k)}.\]

Let $\mathcal{A}$ be a naturally graded associative algebra with characteristic sequence equal to $(n_1,n_2,\dots,n_k)$.

\begin{pr}\label{prom}
There are no naturally graded nilpotent associative algebras with $n_{\sigma(2)}>n_{\sigma(1)}+1$.
\end{pr}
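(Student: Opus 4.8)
The plan is to reduce the statement to a single consequence of associativity; somewhat surprisingly, the natural grading plays no role. The crucial observation is that in an associative algebra $L_{e_1}^{m}$ is just the operator of multiplication by the element $e_1^{m}$, so that $L_{e_1}^{m}=L_{e_1^{m}}$; hence the Jordan structure of $L_{e_1}$ is controlled by the powers of $e_1$. Concretely, I will identify $n_{\sigma(1)}$ with the nilpotency index of the element $e_1$ itself and then show that no Jordan block of $L_{e_1}$ can have size exceeding $n_{\sigma(1)}+1$.

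First I would locate $e_1$ inside its own Jordan block. In the basis fixed above, $e_1$ lies in the block $J_{n_{\sigma(1)}}$, and I claim it must be a cyclic generator of that block, i.e.\ $e_1$ does not belong to the image of $L_{e_1}$. Indeed, if $e_1=L_{e_1}(v)=e_1v$ for some $v\in\mathcal{A}$, then iterating and using associativity yields $e_1=e_1v=e_1v^{2}=\dots=e_1v^{s}$ for every $s$; since $\mathcal{A}$ is nilpotent, $v^{s}=0$ for large $s$, forcing $e_1=0$, a contradiction. Therefore the Jordan chain through $e_1$ is $e_1,L_{e_1}e_1,\dots,L_{e_1}^{\,n_{\sigma(1)}-1}e_1$, which by associativity equals $e_1,e_1^{2},\dots,e_1^{\,n_{\sigma(1)}}$; in particular $e_1^{\,n_{\sigma(1)}}\neq 0$ while $e_1^{\,n_{\sigma(1)}+1}=L_{e_1}^{\,n_{\sigma(1)}}e_1=0$. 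Now $L_{e_1}^{\,n_{\sigma(1)}+1}=L_{e_1^{\,n_{\sigma(1)}+1}}=0$, so every Jordan block of $L_{e_1}$ has size at most $n_{\sigma(1)}+1$; in particular $n_{\sigma(2)}\le n_{\sigma(1)}+1$, which is exactly the assertion.

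I do not anticipate a real obstacle. The only step requiring care is the first one --- excluding the possibility that $e_1$ sits in the interior or at the top of a Jordan chain --- since it is what legitimises reading $n_{\sigma(1)}$ as the power-nilpotency index of $e_1$; everything after that is immediate from associativity. Should one wish to keep the naturally graded hypothesis visibly in play, one could instead pass to $\gr\mathcal{A}\cong\mathcal{A}$, choose a homogeneous representative $e_1\in\mathcal{A}_1$, and run a degree argument (each application of $L_{e_1}$ raises the lowest degree of a homogeneous vector by at least one), but this is strictly longer and yields nothing extra.
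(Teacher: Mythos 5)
Your proof is correct and rests on the same single use of associativity as the paper's: your identity $L_{e_1}^{n_{\sigma(1)}+1}=L_{e_1^{n_{\sigma(1)}+1}}=0$ is exactly the paper's chain $e_1^{(1)}\bigl(e_{i_1}^{(1)}e_1^{(2)}\bigr)=\bigl(e_1^{(1)}e_{i_1}^{(1)}\bigr)e_1^{(2)}=0$ repackaged at the operator level. The only additional touches are that you bound \emph{all} blocks rather than just the second, and that you justify $e_1\notin e_1\mathcal{A}$ (hence that the power-nilindex of $e_1$ is $n_{\sigma(1)}$), which the paper leaves implicit in its adapted basis; your remark that the natural grading is never used also matches the paper, whose argument likewise needs only nilpotency and associativity.
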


\begin{proof}
Let $\mathcal{A}$ be naturally graded nilpotent associative algebra with the operator of left multiplication given above.

Then there exists
a basis $\{e_1^{(1)},e_2^{(1)},\dots, e_{i_1}^{(1)}, e_1^{(2)},e_2^{(2)},\dots, e_{i_2}^{(2)},\dots, e_1^{(n)},e_2^{(n)},\dots, e_{i_n}^{(n)}\}$ (so-called adapted basis) such that

\[\left\{\begin{array}{ll}
e_1^{(1)}e_j^{(p)}=e_{j+1}^{(p)}, & 1\leq j\leq i_p-1, 1\leq p\leq n,\\
e_1^{(1)}e_{i_p}^{(p)}=0, & 1\leq p\leq n.
\end{array}\right. \]


By induction, we can deduce that
\[\left\{\begin{array}{ll}
e_r^{(1)}e_j^{(p)}=e_{r+j}^{(p)}, & 2\leq r+j\leq i_p, 1\leq p\leq n,\\
e_r^{(1)}e_j^{(p)}=0, &  r+j\geq i_p+1, 1\leq p\leq n,
\end{array}\right. \]


 Let us suppose $i_2>i_1+1$. Then, from the chain of equalities
\[e_{i_1+2}^{(2)}=e_1^{(1)}e_{i_1+1}^{(2)}=e_1^{(1)}(e_{i_1}^{(1)}e_1^{(2)})=(e_1^{(1)}e_{i_1}^{(1)})e_1^{(2)}=0,\]
we have contradiction. Consequently, there are no such type algebras for $i_2>i_1+1$.
\end{proof}

\begin{remark} In the case when $i_2=i_1+1$ investigated in \cite{kar3}. Actually, in \cite{kar3} the author classified naturally graded associative algebras with the operator $L_{e_1}$ has the following form
\[\begin{pmatrix}
J_{m}&0\\
0&J_{m+1}
\end{pmatrix}.\]

\end{remark}

Let $\mathcal{A}$  be a naturally graded $n$-dimensional associative algebra with characteristic sequence $C(\mathcal{A})=(n-3,2,1)$. Then there exists
a basis $\{e_1,e_2,\dots, e_{n}\}$ such that $e_1 \in \mathcal{A}\backslash\mathcal{A}^2$ and $C(e_1)=(n-3,2,1)$.

By the definition of characteristic sequence the operator $L_{e_1}$ in the Jordan form has one block $J_{n-3}$ of size $n-3$, second block $J_2$ of size $2$ and third block $J_1$ of size one.

The possible forms for the operator $L_{e_1}$ are six. By changing basis it can be reduced to the following three cases:
\[\begin{pmatrix}
J_{n-3}&0&0\\
0&J_2&0\\
0&0&J_1\\
\end{pmatrix},\quad
\begin{pmatrix}
J_{2}&0&0\\
0&J_{n-3}&0\\
0&0&J_1\\
\end{pmatrix}, \quad
\begin{pmatrix}
J_{1}&0&0\\
0&J_{n-3}&0\\
0&0&J_2\\
\end{pmatrix}. \]

By Proposition \ref{prom} it is easy to see for $n>6$ there are not exist any naturally graded associative algebras which the operator of the left multiplication has the last two cases. Therefore, it is sufficient to consider only first case for $n>6$.

%
%

Let $\mathcal{A}$ be naturally graded nilpotent associative algebra with characteristic sequence $(n-3,2,1)$ for $n>6$. Then there exists
a basis $\{e_1,e_2,\dots, e_{n}\}$ (so-called adapted basis) such that
\[\left\{\begin{array}{ll}
e_1e_i=e_{i+1}, & 1\leq i\leq n-4,\\
e_1e_{n-3}=0,\\
e_1e_{n-2}=e_{n-1},\\
e_1e_{n-1}=0,\\
e_1e_n=0.
\end{array}\right. \]

From these products, it is easy to see that
\[\mathcal{A}_i\supseteq \langle e_i\rangle \quad 1\leq i\leq n-3.\]

By applying associativity low and induction, we can obtain that
\begin{equation}\label{for1} \left\{\begin{array}{ll}
e_ie_j=e_{i+j}, & 2\leq i+j\leq n-3,\\
e_ie_{n-3}=0, & 1\leq i\leq n,\\
e_1e_{n-2}=e_{n-1},\\
e_ie_{n-2}=0, & 2\leq i\leq n-3,\\
e_ie_{n-1}=e_ie_n=0,& 1\leq i\leq n-3.
\end{array}\right.\end{equation}

Let us suppose that $e_{n-2}\in \mathcal{A}_{r_1}$ and $e_{n}\in \mathcal{A}_{r_2}$, then $e_{n-1}\in \mathcal{A}_{r_1+1}$.

It is clear that $dim(\mathcal{A}_1)>1$. Otherwise the algebra $\mathcal{A}$ is one generated algebra and thats why it is a null-filiform, but it is not an algebra of nilindex $n-3$. Thus, $r_1=1$ or $r_2=1$.

We consider all possible cases depending on the values $r_1$ and $r_2$.

\textbf{Case $\mathbf{r_1=r_2=1}$.} Let $r_1=r_2=1$ and $n>6$, then we have
\[\mathcal{A}_1= \langle e_1,e_{n-2},e_n\rangle, \quad \mathcal{A}_2= \langle e_2,e_{n-1}\rangle, \quad \mathcal{A}_i= \langle e_i\rangle, \quad 3\leq i\leq n-3.\]

Moreover, we have the products which given in (\ref{for1}) and we can assume
\[\begin{array}{llllll}
e_{n-2}e_1&=&\beta_1e_2+\beta_2e_{n-1}, & e_{n-2}e_2&=&\beta_{13}e_3,\\
e_{n-2}e_{n-2}&=&\beta_3e_2+\beta_4e_{n-1}, & e_{n-2}e_{n-1}&=&\beta_{14}e_3,\\
e_{n-2}e_{n}&=&\beta_5e_2+\beta_6e_{n-1},&\\
e_ne_1&=&\beta_7e_2+\beta_8e_{n-1}, & e_ne_2&=&\beta_{15}e_3,\\
e_ne_{n-2}&=&\beta_9e_2+\beta_{10}e_{n-1},& e_ne_{n-1}&=&\beta_{16}e_3,\\
e_ne_n&=&\beta_{11}e_2+\beta_{12}e_{n-1}. &\\
\end{array}\]

By verifying the associativity low on elements, we have the following restrictions.
\[\begin{array}{llll}
\text{ Identity }& & \text{ Constraints } &\\[1mm]
\hline \hline\\
(e_2,e_{n-2},e_1)=0&\Longrightarrow &\beta_1=0, &  \\[1mm]
(e_2,e_{n-2},e_{n-2})=0&\Longrightarrow &\beta_3=0, &  \\[1mm]
(e_2,e_{n-2},e_n)=0&\Longrightarrow &\beta_5=0, &  \\[1mm]
(e_1,e_n,e_1)=0&\Longrightarrow &\beta_7=0, &  \\[1mm]
(e_1,e_n,e_{n-2})=0&\Longrightarrow &\beta_9=0, &  \\[1mm]
(e_1,e_n,e_n)=0&\Longrightarrow &\beta_{11}=0, &  \\[1mm]
(e_1,e_n,e_2)=0&\Longrightarrow &\beta_{15}=0, &  \\[1mm]
(e_1,e_n,e_{n-1})=0&\Longrightarrow &\beta_{16}=0. &  \\[1mm]

\end{array}\]

By considering the next equalities
\[\begin{array}{l}
e_{n-1}e_1=(e_1e_{n-2})e_1=e_1(e_{n-2}e_1)=\beta_2e_1e_{n-1}=0,\\[1mm]
e_{n-1}e_{n-2}=(e_1e_{n-2})e_{n-2}=e_1(e_{n-2}e_{n-2})=\beta_4e_1e_{n-1}=0,\\[1mm]
e_{n-1}e_{n-1}=e_{n-1}(e_1e_{n-2})=(e_{n-1}e_1)e_{n-2}=0,\\[1mm]
e_{n-1}e_n=(e_1e_{n-2})e_n=e_1(e_{n-2}e_n)=\beta_6e_1e_{n-1}=0,\\[1mm]
\end{array}\]
we get \(e_{n-1}e_1=e_{n-1}e_{n-2}=e_{n-1}e_{n-1}=e_{n-1}e_n=0\).

From the chain of equalities
\[\beta_{13}e_3=e_{n-2}e_2=e_{n-2}(e_1e_1)=(e_{n-2}e_1)e_1=\beta_2e_{n-1}e_1=0,\]
we conclude that \(\beta_{13}=0\).

Then the equalities
\[0=e_{n-2}(e_{n-1}e_1)=(e_{n-2}e_{n-1})e_1=\beta_{14}e_4\]
imply \(\beta_{14}=0\).

It is not difficult to observe that
\[e_{n-2}e_i=e_{n-2}(e_1e_{i-1})=(e_{n-2}e_1)e_{i-1}=\beta_2e_{n-1}e_{i-1}=\beta_2e_{n-1}(e_1e_{i-2})=\beta_2(e_{n-1}e_1)e_{i-2}=0,\]
for \(2\leq i\leq n-3\).

Analogously, as in the previous case it is easy to check that
\[e_{n-1}e_i=e_ne_i=0,\]
for \(2\leq i\leq n-3\).

Summarizing and by denoting parameters, we obtain the following table of multiplication of the algebra in this case and we denote it $\mathcal{A}(\alpha_1,\alpha_2,\alpha_3,\alpha_4,\alpha_5,\alpha_6)$
\begin{equation}\label{for2} \left\{\begin{array}{ll}
e_ie_j=e_{i+j}, & 2\leq i+j\leq n-3,\\
e_1e_{n-2}=e_{n-1},\\
e_{n-2}e_1=\alpha_1e_{n-1}, \\
e_{n-2}e_{n-2}=\alpha_2e_{n-1}, \\
e_{n-2}e_{n}=\alpha_3e_{n-1},\\
e_ne_1=\alpha_4e_{n-1},\\
e_ne_{n-2}=\alpha_5e_{n-1},\\
e_ne_n=\alpha_6e_{n-1}. \\
\end{array}\right.\end{equation}
where the omitted products are vanish.

\begin{teo}\label{teo} An arbitrary associative algebra of the family $\mathcal{A}(\alpha_1,\alpha_2,\alpha_3,\alpha_4,\alpha_5,\alpha_6)$ is isomorphic to one of the following pairwise non-isomorphic algebras:
\[
\begin{array}{lll}
\mathcal{A}(\alpha,0,0,0,0,0),& \mathcal{A}(0,0,0,1,0,0),& \mathcal{A}(1,1,0,0,0,0),\\
\mathcal{A}(0,1,0,0,0,0), & \mathcal{A}(1,0,1,0,0,0), &\mathcal{A}(1,0,0,1,0,1), \\
\mathcal{A}(\beta,0,0,0,0,1),& \mathcal{A}(1,1,0,0,0,1), & \mathcal{A}(0,1,0,\mathbf{i},0,1),\\
\mathcal{A}(0,1,0,0,0,1),& \mathcal{A}(0,0,0,0,1,0),& \mathcal{A}(1,0,0,0,1,0),\\
\mathcal{A}(0,0,0,1,1,1),&\mathcal{A}(1,0,0,0,1,1),&\mathcal{A}(0,1,0,\gamma,1,\gamma(1-\gamma)),\\
\mathcal{A}(1,1,0,0,1,\delta).
\end{array}\]
where  $\alpha,\beta \in \mathds{C}, \delta\in \mathds{C}\setminus\{0\}, \gamma\in \mathds{C}\setminus\{0,1\}, $
\end{teo}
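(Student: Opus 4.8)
The plan is to classify the six‑parameter family $\mathcal{A}(\alpha_1,\dots,\alpha_6)$ by running the isomorphism action of the group of graded automorphisms of the algebra on the parameter vector $(\alpha_1,\dots,\alpha_6)$. First I would write down the most general change of adapted basis compatible with the natural grading and with the already‑fixed structure constants in \eqref{for2}. Since $\mathcal{A}_1=\langle e_1,e_{n-2},e_n\rangle$ and $e_1\notin\mathcal{A}^2$, a graded automorphism must send
\[
e_1'=a\,e_1+b\,e_{n-2}+c\,e_n,\qquad e_{n-2}'=p\,e_1+q\,e_{n-2}+r\,e_n,\qquad e_n'=s\,e_1+t\,e_{n-2}+u\,e_n,
\]
and the remaining basis vectors $e_i'$ ($2\le i\le n-3$), $e_{n-1}'$ are then determined by multiplying out, e.g. $e_i'=(e_1')^{i}$ up to the obvious products, and $e_{n-1}'=e_1'e_{n-2}'$. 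I would first extract the constraints forcing these new vectors to again form an adapted basis of the same shape: this kills most of $b,c,p,r,s,t$ and links the rest (in particular $b=c=0$ because $e_1'$ must still generate the long Jordan chain $e_2',\dots,e_{n-3}'$, while $e_{n-2},e_n$ act as zero there). After this reduction one is left with a small effective group — essentially the invertible $2\times2$ block $\begin{pmatrix} q & t \\ p & u\end{pmatrix}$ acting on $\langle e_{n-2},e_n\rangle$, together with the scalar $a$ rescaling the long chain and hence $e_{n-1}'=a q\, e_{n-1}+\dots$.

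The second step is to compute how $(\alpha_1,\dots,\alpha_6)$ transforms. Plugging the substitutions into the seven defining products of \eqref{for2} and reading off coefficients of $e_{n-1}'$ gives explicit rational expressions $\alpha_i'=\alpha_i'(\alpha_j;a,p,q,\dots)$. I expect the transformation to be quadratic in the $2\times2$ block entries and to scale by $1/(aq)$ overall (one factor of $a$ from the long chain, one $q$ or related from $e_{n-2}'$). Two structural features should emerge that organize the casework: $\alpha_3$ (the coefficient in $e_{n-2}e_n$) behaves differently from the others, and the pair $(\alpha_1,\alpha_2)$ governing $e_{n-2}e_1,\ e_{n-2}e_{n-2}$ transforms among itself in a way that lets us normalize it to one of $(0,0),(1,0),(0,1),(1,1),(\alpha,0)$. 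Once $(\alpha_1,\alpha_2)$ is pinned down, the residual stabilizer acts on $(\alpha_4,\alpha_5,\alpha_6)$, and a further normalization (scaling, and using $t,p$ to clear cross terms) yields the listed representatives — including the continuous families $\mathcal{A}(\alpha,0,0,0,0,0)$, $\mathcal{A}(\beta,0,0,0,0,1)$, $\mathcal{A}(0,1,0,\gamma,1,\gamma(1-\gamma))$, $\mathcal{A}(1,1,0,0,1,\delta)$ whose moduli are genuine invariants of the residual action.

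The third step is to prove pairwise non‑isomorphism of the final list. For this I would exhibit a handful of discrete invariants preserved by any isomorphism — for instance $\dim\{x\in\mathcal{A}_1 : xe_1=e_1x\}$, the rank of the symmetric/antisymmetric part of the multiplication restricted to $\mathcal{A}_1\times\mathcal{A}_1\to\langle e_{n-1}\rangle$, whether $e_{n-2}$ and $e_n$ can be chosen with $e_n^2=0$, and the number of distinct roots of a normalized characteristic polynomial attached to that bilinear form — and check that they separate the algebras in the list; for the one‑parameter families one shows directly that the parameter (or the orbit of $\gamma$ under $\gamma\mapsto 1-\gamma$, which explains the shape $\gamma(1-\gamma)$) is a complete invariant.

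The main obstacle I anticipate is organizing the casework so that it is exhaustive and non‑redundant: the action of the (reduced) automorphism group on $\mathbb{C}^6$ is nonlinear, the generic and degenerate strata of $(\alpha_1,\alpha_2)$ and of $\alpha_6$ interact, and over $\mathbb{C}$ one must be careful with square roots and with the cases where the $2\times2$ block forces a specific value such as the $\mathbf{i}$ in $\mathcal{A}(0,1,0,\mathbf{i},0,1)$ — this appears precisely when completing a square produces $\sqrt{-1}$ and no further automorphism can remove it. Keeping a running table of "which parameters are still free / which have been spent" is the practical device that will make the bookkeeping tractable; the associativity constraints themselves are, after \eqref{for2}, already fully used, so no new identities enter — it is purely an orbit computation.
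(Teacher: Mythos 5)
Your overall strategy --- computing the orbits of the graded basis-change group on the parameter vector $(\alpha_1,\dots,\alpha_6)$ --- is exactly the paper's, but there is a concrete error in your determination of that group which would derail the whole computation. You claim that $b=c=0$, i.e.\ that $e_1'$ cannot involve $e_{n-2}$ and $e_n$, ``because $e_1'$ must still generate the long Jordan chain.'' This is false: since $e_{n-2}$ and $e_n$ multiply everything into $\langle e_{n-1}\rangle\subset\mathcal{A}_2$, one gets $e_2'=(e_1')^2=A_1^2e_2+\lambda e_{n-1}$ (a perfectly good adapted basis vector, as $\mathcal{A}_2=\langle e_2,e_{n-1}\rangle$) and $e_i'=A_1^{i}e_i$ for $i\geq 3$, so the coefficients $A_2,A_3$ of $e_{n-2},e_n$ in $e_1'$ survive. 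The constraints that actually hold are $B_1=C_1=0$ (the $e_1$-coefficients of $e_{n-2}'$ and $e_n'$, forced by the $J_2$ and $J_1$ block structure of $L_{e_1'}$) together with one relation determining $C_2$, leaving a six-parameter group in $A_1,A_2,A_3,B_2,B_3,C_3$. With your smaller group the classification comes out strictly finer than the stated one: for instance in the stratum $\alpha_2\neq0$, $\alpha_3=\alpha_4=\alpha_5=\alpha_6=0$ your transformation would give $\alpha_1'=\alpha_1+\alpha_4B_3/B_2=\alpha_1$, so $\alpha_1$ would be a modulus and you would report a spurious one-parameter family where the theorem lists only $\mathcal{A}(1,1,0,0,0,0)$ and $\mathcal{A}(0,1,0,0,0,0)$; killing $\alpha_1$ there requires precisely the freedom in $A_2$ that you discarded.

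Beyond that, the proposal is a plan rather than a proof: the entire content of the theorem is the explicit transformation formulas and the exhaustive case analysis, neither of which you carry out. Your guess that $(\alpha_1,\alpha_2)$ ``transforms among itself'' is also not borne out --- the actual formulas mix all six parameters --- and the paper organizes the casework around different invariants, namely $\alpha_5-\alpha_3$, $\alpha_2\alpha_6-\alpha_3\alpha_5$, $(\alpha_3+\alpha_5)^2-4\alpha_2\alpha_6$, and (in the deepest branch) a degree-five polynomial $\nabla$ in the $\alpha_i$, rather than a stratification by $(\alpha_1,\alpha_2)$ first. Your remarks on the non-isomorphism step and on the provenance of $\mathbf{i}$ and of the symmetric shape $\gamma(1-\gamma)$ are plausible in spirit, but they cannot be assessed until the group action is set up correctly.
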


\begin{proof}
Due to the property of natural gradation of the algebra it is enough to consider the following change
of generators:
\[\begin{array}{l}
e_1^\prime= A_1e_1+A_2e_{n-2}+A_3e_n, \\
e_{n-2}^\prime=B_1e_1+B_2e_{n-2}+B_3e_n, \\
e_{n}^\prime=C_1e_1+C_2e_{n-2}+C_3e_n
\end{array}\]
and the other elements of the new basis are obtained as products of the above elements.

Then, we obtain the following restrictions:
\begin{align*}
B_1=C_1=0, \quad C_2=-\frac{\alpha_3A_2C_3+\alpha_6A_3C_3}{A_1+\alpha_2A_2+\alpha_5A_3}, \\
\frac{A_1C_3(A_1B_2+\alpha_2A_2B_2+\alpha_3A_2B_3+\alpha_5A_3B_2+\alpha_6A_3B_3)}{A_1+\alpha_2A_2+\alpha_5A_3}\neq0
\end{align*}

and parameters
\begin{align*}
\alpha_1^\prime=\frac{(\alpha_1 A_1+\alpha_2 A_2+\alpha_3 A_3)B_2+(\alpha_4 A_1+\alpha_5 A_2+\alpha_6 A_3)B_3}{A_1B_2+\alpha_2A_2B_2+\alpha_3A_2B_3+\alpha_5A_3B_2+\alpha_6A_3B_3},\\
\alpha_2^\prime=\frac{\alpha_2B_2^2+(\alpha_3+\alpha_5)B_2B_3+\alpha_6B_3^2}{A_1B_2+\alpha_2A_2B_2+\alpha_3A_2B_3+\alpha_5A_3B_2+\alpha_6A_3B_3},
\end{align*}
\begin{align*}
\alpha_3^\prime=\frac{((\alpha_3B_2+\alpha_6B_3)A_1+(\alpha_2\alpha_6-\alpha_3\alpha_5)(A_2B_3-A_3B_2))C_3}{(A_1+\alpha_2A_2+\alpha_5A_3)(A_1B_2+\alpha_2A_2B_2+\alpha_3A_2B_3+\alpha_5A_3B_2+\alpha_6A_3B_3)},
\end{align*}
\begin{align*}
\alpha_4^\prime=\frac{(\alpha_4A_1^2+(\alpha_2\alpha_4-\alpha_1\alpha_3+\alpha_5)A_1A_2+\alpha_2(\alpha_5-\alpha_3)A_2^2)C_3}{(A_1+\alpha_2A_2+\alpha_5A_3)(A_1B_2+\alpha_2A_2B_2+\alpha_3A_2B_3+\alpha_5A_3B_2+\alpha_6A_3B_3)}+\\
\frac{((\alpha_4\alpha_5-\alpha_1\alpha_6+\alpha_6)A_1A_3+(\alpha_5^2-\alpha_3^2)A_2A_3+\alpha_6(\alpha_5-\alpha_3)A_3^2)C_3}{(A_1+\alpha_2A_2+\alpha_5A_3)(A_1B_2+\alpha_2A_2B_2+\alpha_3A_2B_3+\alpha_5A_3B_2+\alpha_6A_3B_3)},
\end{align*}
\begin{align*}
\alpha_5^\prime=\frac{((\alpha_5B_2+\alpha_6B_3)A_1+\alpha_2(\alpha_5-\alpha_3)A_2B_2+(\alpha_5^2-\alpha_2\alpha_6)A_3B_2)C_3}{(A_1+\alpha_2A_2+\alpha_5A_3)(A_1B_2+\alpha_2A_2B_2+\alpha_3A_2B_3+\alpha_5A_3B_2+\alpha_6A_3B_3)}+\\
\frac{((\alpha_2\alpha_6-\alpha_3^2)A_2B_3+\alpha_6(\alpha_5-\alpha_3)A_3B_3)C_3}{(A_1+\alpha_2A_2+\alpha_5A_3)(A_1B_2+\alpha_2A_2B_2+\alpha_3A_2B_3+\alpha_5A_3B_2+\alpha_6A_3B_3)},
\end{align*}
\begin{align*}
\alpha_6^\prime=\frac{(\alpha_6A_1^2+(2\alpha_2\alpha_6-\alpha_3(\alpha_5+\alpha_3))A_1A_2+\alpha_2(\alpha_2\alpha_6-\alpha_3\alpha_5)A_2^2)C_3^2}{(A_1+\alpha_2A_2+\alpha_5A_3)(A_1B_2+\alpha_2A_2B_2+\alpha_3A_2B_3+\alpha_5A_3B_2+\alpha_6A_3B_3)}+\\
\frac{(\alpha_6(\alpha_5-\alpha_3)A_1A_3+(\alpha_3\alpha_5(\alpha_5-\alpha_3)+\alpha_2\alpha_6(\alpha_3+\alpha_5))A_2A_3)C_3^2}{(A_1+\alpha_2A_2+\alpha_5A_3)(A_1B_2+\alpha_2A_2B_2+\alpha_3A_2B_3+\alpha_5A_3B_2+\alpha_6A_3B_3)}+\\
\frac{\alpha_6(\alpha_2\alpha_6-\alpha_3\alpha_5)A_3^2C_3^2}{(A_1+\alpha_2A_2+\alpha_5A_3)(A_1B_2+\alpha_2A_2B_2+\alpha_3A_2B_3+\alpha_5A_3B_2+\alpha_6A_3B_3)}.
\end{align*}

Furthermore, we obtain the next invariant expressions
\[\alpha_5^\prime-\alpha_3^\prime=\frac{(\alpha_5-\alpha_3)C_3}{A_1+\alpha_2A_2+\alpha_5A_3}, \quad \alpha_2^\prime\alpha_6^\prime-\alpha_3^\prime\alpha_5^\prime=\frac{(\alpha_2\alpha_6-\alpha_3\alpha_5)C_3^2}{(A_1+\alpha_2A_2+\alpha_5A_3)^2},\]
\[(\alpha_3^\prime+\alpha_5^\prime)^2-4\alpha_2^\prime\alpha_6^\prime=\frac{((\alpha_3+\alpha_5)^2-4\alpha_2\alpha_6)C_3^2}{(A_1+\alpha_2A_2+\alpha_5A_3)^2}.\]

\textbf{Case a.} Let $\alpha_5=\alpha_3$, then $\alpha_5^\prime=\alpha_3^\prime$ and we have
\[\alpha_2^\prime\alpha_6^\prime-(\alpha_3^\prime)^2=\frac{(\alpha_2\alpha_6-\alpha_3^2)C_3^2}{(A_1+\alpha_2A_2+\alpha_5A_3)^2}.\]

\textbf{Case a.1.} Let $\alpha_3^2=\alpha_2\alpha_6$, then $(\alpha_3^\prime)^2=\alpha_2^\prime\alpha_6^\prime$ and
\[\alpha_6^\prime=\frac{\alpha_6A_1^2C_3^2}{(A_1+\alpha_2A_2+\sqrt{\alpha_2\alpha_6}A_3)(A_1B_2+\alpha_2A_2B_2+\sqrt{\alpha_2\alpha_6}(A_2B_3+A_3B_2)+\alpha_6A_3B_3)}\]

 \textbf{Case a.1.1.} Let $\alpha_6=0$, then $\alpha_6^\prime=0$ and
\[\alpha_2^\prime=\frac{\alpha_2B_2}{A_1+\alpha_2A_2}, \quad \alpha_4^\prime=\frac{\alpha_4A_1C_3}{(A_1+\alpha_2A_2)B_2}, \quad A_1B_2C_3(A_1+\alpha_2A_2)\neq0.\]

\textbf{Case a.1.1.1.} Let $\alpha_2=0$, then $\alpha_2^\prime=0$ and
\[\alpha_4^\prime=\frac{\alpha_4C_3}{B_2}, \quad \alpha_1^\prime=\frac{\alpha_1B_2+\alpha_4B_3}{B_2}, \quad A_1B_2C_3\neq0.\]

\textbf{Case a.1.1.1.1.} Let $\alpha_4=0$, then $\alpha_4^\prime=0$ and $\alpha_1^\prime=\alpha_1$. So, we obtain $\mathcal{A}(\beta,0,0,0,0,0), \beta\in\mathds{C}$.

\textbf{Case a.1.1.1.2.} Let $\alpha_4\neq0$, then by changing $C_3=B_2/\alpha_4$ and $B_3=-\alpha_1B_2/\alpha_4$, we get $\alpha_1^\prime=0, \ \alpha_4^\prime=1$. Follows, we have $\mathcal{A}(0,0,0,1,0,0)$.

\textbf{Case a.1.1.2.} Let $\alpha_2\neq0$, then by choosing $B_2=\frac{A_1+\alpha_2A_2}{\alpha_2}$, we get $\alpha_2^\prime=1$ and
\[\alpha_4^\prime=\frac{\alpha_2\alpha_4A_1C_3}{(A_1+\alpha_2A_2)^2}, \quad A_1C_3(A_1+\alpha_2A_2)\neq0.\]

\textbf{Case a.1.1.2.1.} Let $\alpha_4=0$, then $\alpha_4^\prime=0$ and we obtain
\[\alpha_1^\prime-1=\frac{(\alpha_1-1)A_1}{A_1+\alpha_2A_1}, \quad A_1C_3(A_1+\alpha_2A_2)\neq0.\]

\textbf{Case a.1.1.2.1.1.} Let $\alpha_1=1$, then we get $\alpha_1^\prime=1$ and we have $\mathcal{A}(1,1,0,0,0,0)$.

\textbf{Case a.1.1.2.1.2.} Let $\alpha_1\neq1$, then by choosing $A_2=-\alpha_1A_2/\alpha_2$ we get $\alpha_1^\prime=0$ and we have $\mathcal{A}(0,1,0,0,0,0)$.

\textbf{Case a.1.1.2.2.} Let $\alpha_4\neq0$, then by choosing \[B_3=-\frac{\alpha_1A_1^2+\alpha_2(1+\alpha_1)A_1A_2+\alpha_2^2A_2^2}{\alpha_2\alpha_4A_1}, \quad C_3=\frac{(A_1+\alpha_2A_2)^2}{\alpha_2\alpha_4A_1}\] we obtain
$\alpha_1^\prime=0, \alpha_4^\prime=1$ and we have $\mathcal{A}(0,1,0,1,0,0)$.

 \textbf{Case a.1.2.} Let $\alpha_6\neq0$, then by choosing
 \[B_2=\frac{\alpha_6A_1C_3^2}{(A_1+\alpha_2A_2+\sqrt{\alpha_2\alpha_6}A_3)^2},\quad B_3=-\frac{\sqrt{\alpha_2}B_2}{\sqrt{\alpha_6}}\]
we obtain
\[\alpha_2^\prime=\alpha_3^\prime=\alpha_5^\prime=0, \quad \alpha_6^\prime=1.\]

Moreover, we have
\[\alpha_1^\prime=\frac{\alpha_1\sqrt{\alpha_6}-\alpha_4\sqrt{\alpha_2}}{\sqrt{\alpha_6}}, \] \[\alpha_4^\prime=\frac{(A_1+\alpha_2A_2+\sqrt{\alpha_2\alpha_6}A_3)}{\alpha_6A_1C_3}\times\]
\[\times(\alpha_4A_1+\alpha_2\alpha_4A_2+((1-\alpha_1)A_2+\alpha_4A_3)\sqrt{\alpha_2\alpha_6}+\alpha_6(1-\alpha_1)A_3).\]

\textbf{Case a.1.2.1.} Let $\alpha_1=1$, then $\alpha_1^\prime=1$ and
\[\alpha_4^\prime=\frac{\alpha_4(A_1+\alpha_2A_2+\sqrt{\alpha_2\alpha_6}A_3)^2}{\alpha_6A_1C_3}.\]

\textbf{Case a.1.2.1.1.} Let $\alpha_4=0$, then $\alpha_4^\prime=0$ and we have  $\mathcal{A}(1,0,0,0,0,1)$.

\textbf{Case a.1.2.1.2.} Let $\alpha_4\neq0$, then by choosing
\[C_3=\frac{\alpha_4(A_1+\alpha_2A_2+\sqrt{\alpha_2\alpha_6}A_3)^2}{\alpha_6A_1}\]
we obtain $\alpha_4^\prime=1$ and we have  $\mathcal{A}(1,0,0,1,0,1)$.

\textbf{Case a.1.2.2.} Let $\alpha_1\neq1$, then $\alpha_1\sqrt{\alpha_6}-\alpha_4\sqrt{\alpha_2}-\sqrt{\alpha_6}\neq0$.
\begin{itemize}
  \item If $\alpha_4=0$ then by choosing $A_3=-\sqrt{\alpha_2}A_2/\sqrt{\alpha_6}$, we have $\alpha_4^\prime=0$
  \item If $\alpha_4\neq0$ then, by choosing \[A_1=\frac{(\alpha_1\sqrt{\alpha_6}-\alpha_4\sqrt{\alpha_2}-\sqrt{\alpha_6})(\sqrt{\alpha_2}A_2+\sqrt{\alpha_6}A_3)}{\alpha_4}\]
         we have $\alpha_4^\prime=0$ and $\mathcal{A}(\alpha,0,0,0,0,1)$ where $\alpha\neq1$.
\end{itemize}

Summarizing cases {\bf a.1.2.1.1.} and {\bf a.1.2.2.} we obtain $\mathcal{A}(\alpha,0,0,0,0,1)$ for any $\alpha\in\mathbb{C}$.

\textbf{Case a.2.} Let $\alpha_3^2\neq \alpha_2\alpha_6$, then by putting
\[B_2=\frac{(\alpha_3^2-\alpha_2\alpha_6)A_2-\alpha_6A_1}{\alpha_3^2-\alpha_2\alpha_6}, \quad B_3=\frac{\alpha_3A_1+(\alpha_3^2-\alpha_2\alpha_6)A_3}{\alpha_3^2-\alpha_2\alpha_6}, \] \[C_3=\frac{A_1+\alpha_2A_2+\alpha_3A_3}{\sqrt{\alpha_3^2-\alpha_2\alpha_6}},\]
we obtain
\[\alpha_2^\prime=\alpha_6^\prime=1, \quad \alpha_3^\prime=\alpha_5^\prime=0.\]

So, we have
\[\left\{\begin{array}{ll}
e_ie_j=e_{i+j}, & 2\leq i+j\leq n-3,\\
e_1e_{n-2}=e_{n-1},\\
e_{n-2}e_1=\alpha_1^\prime e_{n-1}, \\
e_{n-2}e_{n-2}=e_{n-1}, \\
e_ne_1=\alpha_4^\prime e_{n-1},\\
e_ne_n=e_{n-1}. \\
\end{array}\right.\]
We make a generic change of basis as above, we have the next restrictions
\[B_1=C_1=0, \quad  C_2=-A_3, \quad B_3=A_3, \quad B_2=A_1+A_2, \quad C_3=A_1+A_2,\]
\[A_1((A_1+A_2)^2+A_3^2)\neq0.\]
Moreover, we get
\[\alpha_1'^\prime=\frac{\alpha_1^\prime A_1^2+(1+\alpha_1^\prime)A_1A_2+A_2^2+\alpha_4^\prime A_1A_3+A_3^2}{(A_1+A_2)^2+A_3^2},\]
\[\alpha_4'^\prime=\frac{A_1(\alpha_4^\prime(A_1+A_2)+(1-\alpha_1^\prime)A_3)}{(A_1+A_2)^2+A_3^2}.\]
Furthermore, we obtain the next invariant expressions
\[(1-\alpha_1'^\prime)^2+(\alpha_4'^\prime)^2=\frac{(1-\alpha_1^\prime)^2+(\alpha_4^\prime)^2}{(A_1+A_2)^2+A_3^2}.\]

\textbf{Case a.2.1.} Let $(1-\alpha_1^\prime)^2+(\alpha_4^\prime)^2=0$, then $(1-\alpha_1'^\prime)^2+(\alpha_4'^\prime)^2=0$  and we have
\[1-\alpha_1'^\prime=\frac{(1-\alpha_1^\prime)(A_1+A_2-\mathbf{i}(1-\alpha_1^\prime)A_3)A_1}{(A_1+A_2)^2+A_3^2}.\]

\textbf{Case a.2.1.1.} Let $\alpha_1^\prime=1$, then $\alpha_1'^\prime=1$ and  $\alpha_4'^\prime=0$. So, we have $\mathcal{A}(1,1,0,0,0,1)$.

\textbf{Case a.2.1.2.} Let $\alpha_1^\prime\neq1$, then by choosing $A_2=-(\alpha_1A_1+\mathbf{i}A_3)$ we get $\alpha_1'^\prime=0$ and  $\alpha_4'^\prime=\mathbf{i}$. Follows, we have $\mathcal{A}(0,1,0,\mathbf{i},0,1)$.

\textbf{Case a.2.2.} Let $(1-\alpha_1^\prime)^2+(\alpha_4^\prime)^2\neq0$, then $(1-\alpha_1'^\prime)^2+(\alpha_4'^\prime)^2\neq0$.
\begin{itemize}
  \item If $\alpha_4^\prime=0$, then $1-\alpha_1^\prime\neq0$ and by choosing $A_2=-\alpha_1^\prime A_1, \ A_3=0$ we get $\alpha_1'^\prime=\alpha_4'^\prime=0$.
  \item If $\alpha_4^\prime\neq0$, then we choose \(A_2=-\alpha_1^\prime A_1, \ A_3=-\alpha_4^\prime A_1\) we obtain $\alpha_1'^\prime=\alpha_4'^\prime=0$.
\end{itemize}

Follows we have $\mathcal{A}(0,1,0,0,0,1)$.

\textbf{Case b.} Let $\alpha_5-\alpha_3\neq0$, then $\alpha_5^\prime-\alpha_3^\prime\neq0$ and we have
\[\alpha_2^\prime\alpha_6^\prime-\alpha_3^\prime\alpha_5^\prime=\frac{(\alpha_2\alpha_6-\alpha_3\alpha_5)C_3^2}{(A_1+\alpha_2A_2+\alpha_5A_3)^2},\]
\[(\alpha_3^\prime+\alpha_5^\prime)^2-4\alpha_2^\prime\alpha_6^\prime=\frac{((\alpha_3+\alpha_5)^2-4\alpha_2\alpha_6)C_3^2}{(A_1+\alpha_2A_2+\alpha_5A_3)^2}.\]

\textbf{Case b.1.} Let $e_n\in R(\mathcal{A})$, then $\alpha_3=\alpha_6=0$ and $\alpha_3^\prime=\alpha_6^\prime=0$, $\alpha_5\neq0, \alpha_5^\prime\neq0$. Moreover, we have
\[\alpha_1^\prime=\frac{\alpha_1A_1B_2+\alpha_2A_2B_2+\alpha_4A_1B_3+\alpha_5A_2B_3}{(A_1+\alpha_2A_2+\alpha_5A_3)B_2}, \quad \alpha_2^\prime=\frac{\alpha_2B_2+\alpha_5B_3}{A_1+\alpha_2A_2+\alpha_5A_3},\]
\[\alpha_4^\prime=\frac{(\alpha_4A_1+\alpha_5A_2)C_3}{(A_1+\alpha_2A_2+\alpha_5A_3)B_2}, \quad \alpha_5^\prime=\frac{\alpha_5C_3}{A_1+\alpha_2A_2+\alpha_5A_3}.\]
Furthermore, we obtain the next invariant expression
\[\alpha_1^\prime\alpha_5^\prime-\alpha_2^\prime\alpha_4^\prime=\frac{(\alpha_1\alpha_5-\alpha_2\alpha_4)A_1C_3}{(A_1+\alpha_2A_2+\alpha_5A_3)^2}\]

\textbf{Case b.1.1.} Let $\alpha_1\alpha_5-\alpha_2\alpha_4=0$, then we have $\alpha_1=\frac{\alpha_2\alpha_4}{\alpha_5}$. We choose
\[A_2=-\frac{\alpha_4A_1}{\alpha_5}, \quad B_3=-\frac{\alpha_2B_2}{\alpha_5}, \quad C_3=\frac{(\alpha_5-\alpha_2\alpha_4)A_1+\alpha_5^2A_3}{\alpha_5^2}\]
and we get
\[\alpha_1^\prime=0, \quad \alpha_2^\prime=0, \quad \alpha_4^\prime=0, \quad \alpha_5^\prime=1.\]
Follows, we have $\mathcal{A}(0,0,0,0,1,0)$.

\textbf{Case b.1.2.} Let $\alpha_1\alpha_5-\alpha_2\alpha_4\neq0$, then by choosing
\[A_2=-\frac{\alpha_4A_1}{\alpha_5}, \quad A_3=\frac{(\alpha_1-1)A_1}{\alpha_5}, \quad B_3=-\frac{\alpha_2B_2}{\alpha_5}, \quad C_3=\frac{(\alpha_1\alpha_5-\alpha_2\alpha_4)A_1}{\alpha_5^2}\]
we obtain
\[\alpha_1^\prime=1, \quad \alpha_2^\prime=0, \quad \alpha_4^\prime=0, \quad \alpha_5^\prime=1.\]
Follows, we have $\mathcal{A}(1,0,0,0,1,0)$.

\textbf{Case b.2.} Let $e_n\notin R(\mathcal{A})$, then $(\alpha_3,\alpha_6)\neq(0,0)$. Then without losing generality we may assume that $\alpha_3\neq0$. Otherwise $\alpha_3=0$, $\alpha_6\neq0$ and we make change of basis as $e_{n-2}^\prime=e_{n-2}+e_n$ we obtain $\alpha_3^\prime=\alpha_6\neq0$.

\textbf{Case b.2.1.} Let $\alpha_2\alpha_6-\alpha_3\alpha_5=0$, then $\alpha_5=\frac{\alpha_2\alpha_6}{\alpha_3}$ and from $\alpha_3\neq\alpha_5$ we have $\alpha_3^2-\alpha_2\alpha_6\neq0$. Moreover, we get the next invariant expression
\[\alpha_1^\prime\alpha_6^\prime-\alpha_3^\prime\alpha_4^\prime=\frac{\alpha_3^2(\alpha_1\alpha_6-\alpha_3\alpha_4)}{(\alpha_3(A_1+\alpha_2A_2)+\alpha_2\alpha_6A_3)^2(\alpha_3A_1B_2+(\alpha_3A_2+\alpha_6A_3)(\alpha_2B_2+\alpha_3B_3))}.\]

\textbf{Case b.2.1.1.} Let $\alpha_1\alpha_6-\alpha_3\alpha_4=0$, then $\alpha_4=\frac{\alpha_1\alpha_6}{\alpha_3}$ and we choose
\[A_3=\frac{\alpha_2\alpha_6(\alpha_1A_1+\alpha_2A_2)-\alpha_3^2((1+\alpha_1)A_1+\alpha_2A_2)}{\alpha_3(\alpha_3^2-\alpha_2\alpha_6)},\]
\[B_2=\frac{\alpha_3^2\alpha_6A_1}{(\alpha_3^2-\alpha_2\alpha_6)^2}, \quad B_3=-\frac{\alpha_3^2A_1}{(\alpha_3^2-\alpha_2\alpha_6)^2}\]
\[C_3=\frac{-\alpha_3^4(A_1+\alpha_2A_2)-\alpha_2^2\alpha_6^2(\alpha_1A_1+\alpha_2A_2)-\alpha_2\alpha_3^2\alpha_6((2+\alpha_1)A_1+2\alpha_2A_2)}{\alpha_3(\alpha_3^2-\alpha_2\alpha_6)^2}.\]
Follows, we have \[\alpha^\prime_1=\alpha^\prime_2=\alpha^\prime_3=0, \ \alpha^\prime_4=\alpha^\prime_5=\alpha^\prime_6=1\]
and we obtain $\mathcal{A}(0,0,0,1,1,1)$.

\textbf{Case b.2.1.2.} Let $\alpha_1\alpha_6-\alpha_3\alpha_4\neq0$, then by choosing
\[A_2=\frac{\alpha_3\alpha_4A_1}{\alpha_3^2-\alpha_2\alpha_6}, \quad A_3=\frac{(1-\alpha_1)\alpha_3A_1}{\alpha_3^2-\alpha_2\alpha_6}, \quad B_2=\frac{\alpha_3^2\alpha_6A_1}{(\alpha_3^2-\alpha_2\alpha_6)^2},\]
\[B_3=-\frac{\alpha_3^2A_1}{(\alpha_3^2-\alpha_2\alpha_6)^2}, \quad C_3=-\frac{\alpha_3(\alpha_3^2+\alpha_2\alpha_3\alpha_4-\alpha_1\alpha_2\alpha_6)A_1}{(\alpha_3^2-\alpha_2\alpha_6)^2}\]
we obtain
\[\alpha^\prime_2=\alpha^\prime_3=\alpha^\prime_4=0, \ \alpha^\prime_1=\alpha^\prime_5=\alpha^\prime_6=1\]
and we have $\mathcal{A}(1,0,0,0,1,1)$.

\textbf{Case b.2.2.} Let $\alpha_2\alpha_6-\alpha_3\alpha_5\neq0$, then we denote by
\[\begin{array}{lll}
\nabla&=&\alpha_ 3^3 \alpha_ 4 + \alpha_ 3^2 \alpha_ 4 \alpha_ 5 - \alpha_ 1\alpha_ 3^2 \alpha_ 4 \alpha_ 5 + \alpha_ 2 \alpha_ 3 \alpha_ 4^2\alpha_ 5 - \alpha_ 1 \alpha_ 3 \alpha_ 4 \alpha_ 5^2 - \alpha_ 1\alpha_ 3^2 \alpha_ 6 - \\
&&3 \alpha_ 2 \alpha_ 3 \alpha_ 4 \alpha_ 6 +\alpha_ 1 \alpha_ 2 \alpha_ 3 \alpha_ 4 \alpha_ 6 - \alpha_ 2^2 \alpha_ 4^2 \alpha_ 6 + \alpha_ 3 \alpha_ 5 \alpha_ 6 + \alpha_ 1^2 \alpha_ 3 \alpha_ 5 \alpha_ 6 +   \\
&&\alpha_2 \alpha_ 4 \alpha_ 5 \alpha_ 6 +\alpha_ 1 \alpha_ 2 \alpha_ 4 \alpha_ 5 \alpha_ 6 -\alpha_ 1 \alpha_ 5^2 \alpha_ 6 - \alpha_ 2 \alpha_ 6^2 +  2 \alpha_ 1 \alpha_ 2 \alpha_ 6^2 - \alpha_ 1^2 \alpha_ 2 \alpha_ 6^2.\end{array}\]
We have the next invariant expression
\[\nabla^\prime=\frac{\nabla A_1^2C_3^4}{(A_1+\alpha_2A_2+\alpha_5A_3)^4(A_1B_2+\alpha_2A_2B_2+\alpha_5A_3B_2+\alpha_3A_2B_3+\alpha_6A_3B_3)}\]

\textbf{Case b.2.2.1.} Let $\nabla=0$, then by choosing
\[A_3=B_3+\frac{\alpha_3A_1}{\alpha_2\alpha_6-\alpha_3\alpha_5}, \quad B_2=A_2+\frac{\alpha_6A_1}{\alpha_2\alpha_6-\alpha_3\alpha_5},\]
\[C_3=-\frac{\alpha_2\alpha_6A_1}{(\alpha_3-\alpha_5)(\alpha_2\alpha_6-\alpha_3\alpha_5)}-\frac{\alpha_2A_2+\alpha_5B_3}{\alpha_3-\alpha_5}\]
we have
\[\alpha_2^\prime=\alpha_5^\prime=1, \quad \alpha_3^\prime=0, \quad \alpha_6^\prime=\frac{\alpha_2\alpha_6-\alpha_3\alpha_5}{(\alpha_3-\alpha_5)^2}\neq0,\]
with
\[\alpha_6^\prime(\alpha_1^\prime-1)^2+(\alpha_4^\prime-\alpha_1^\prime)(\alpha_4^\prime-1)=0.\]

So we have
\begin{equation}\label{for2} \left\{\begin{array}{ll}
e_ie_j=e_{i+j}, & 2\leq i+j\leq n-3,\\
e_1e_{n-2}=e_{n-1},\\
e_{n-2}e_1=\alpha_1^\prime e_{n-1}, \\
e_{n-2}e_{n-2}=e_{n-1}, \\
e_ne_1=\alpha_4^\prime e_{n-1},\\
e_ne_{n-2}=e_{n-1},\\
e_ne_n=\alpha_6^\prime e_{n-1}, &\alpha_6^\prime\neq0\\
\end{array}\right.\end{equation}
with
\[\alpha_1^\prime-\alpha_4^\prime-\alpha_1^\prime\alpha_4^\prime+(\alpha_4^\prime)^2+\alpha_6^\prime-2\alpha_1^\prime\alpha_6^\prime+(\alpha_1^\prime)^2\alpha_6^\prime=0.\]

Now,we make the generic change of basis
\[\begin{array}{l}
e_1^\prime= A_1e_1+A_2e_{n-2}+A_3e_n, \\
e_{n-2}^\prime=B_1e_1+B_2e_{n-2}+B_3e_n, \\
e_{n}^\prime=C_1e_1+C_2e_{n-2}+C_3e_n
\end{array}\]
and we have the next expressions
\[A_3=B_3, \quad B_1=C_1=0, \quad B_2=A_1+A_2, \quad C_2=-\alpha_6^\prime B_3, \quad C_3=A_1+A_2+B_3.\]
and new parameters
\[\alpha_1'^\prime=\frac{\alpha_1^\prime A_1^2+A_2^2+(1+\alpha_1^\prime)A_1A_2+(\alpha_4^\prime A_1+\alpha_6^\prime B_3+A_2)B_3}{(A_1+A_2)^2+(A_1+A_2+\alpha_6^\prime B_3)B_3},\]
\[\alpha_4'^\prime=\frac{\alpha_4^\prime A_1^2+A_2^2+(1+\alpha_4^\prime)A_1A_2+(\alpha_4^\prime+(1-\alpha_1^\prime)\alpha_6^\prime)A_1B_3+A_2B_3+\alpha_6^\prime B_3^2}{(A_1+A_2)^2+(A_1+A_2+\alpha_6^\prime B_3)B_3},\]
\[\alpha_6'^\prime=\alpha_6^\prime\neq0.\]

It is easy to prove that:
\[\alpha_1'^\prime-\alpha_4'^\prime-\alpha_1'^\prime\alpha_4'^\prime+(\alpha_4'^\prime)^2+\alpha_6'^\prime-2\alpha_1'^\prime\alpha_6'^\prime+(\alpha_1'^\prime)^2\alpha_6'^\prime=\]
\[=\frac{(\alpha_1^\prime-\alpha_4^\prime-\alpha_1^\prime\alpha_4^\prime+(\alpha_4^\prime)^2+\alpha_6^\prime-2\alpha_1^\prime\alpha_6^\prime+(\alpha_1^\prime)^2\alpha_6^\prime)A_1^2}{(A_1+A_2)^2+(A_1+A_2+\alpha_6^\prime B_3)}=0.\]

By choosing
\[A_2=-\frac{1}{2}(A_1+\alpha_1^\prime A_1+B_3-\sqrt{(A_1+\alpha_1^\prime A_1+B_3)^2-4(\alpha_1^\prime A_1^2+\alpha_4^\prime A_1B_3+\alpha_6^\prime B_3^2)})\]
we obtain $\alpha_1'^\prime=0$ and $\alpha_6'^\prime=(1-\alpha_4'^\prime)\alpha_4'^\prime\neq0$.

Thus, we have the family $\mathcal{A}(0,1,0,\gamma,1,(1-\gamma)\gamma)$ with $\gamma\in\mathds{C}\setminus\{0,1\}$.

\textbf{Case b.2.2.2.} Let $\nabla\neq0$, then by choosing
\[A_2=\frac{\alpha_4A_1}{\alpha_3-\alpha_5}, \quad A_3=\frac{(1-\alpha_1)A_1}{\alpha_3-\alpha_5},\quad B_2=\frac{(\alpha_3(\alpha_4\alpha_5-\alpha_6)+\alpha_6(\alpha_5-\alpha_2\alpha_4))A_1}{(\alpha_3-\alpha_5)(\alpha_3\alpha_5-\alpha_2\alpha_6)},\]
\[B_3=\frac{(\alpha_3^2-\alpha_1\alpha_3\alpha_5+(\alpha_1-1)\alpha_2\alpha_6)A_1}{(\alpha_3-\alpha_5)(\alpha_3\alpha_5-\alpha_2\alpha_6)},\quad C_3=\frac{(\alpha_1\alpha_5-\alpha_2\alpha_4-\alpha_3)A_1}{(\alpha_3-\alpha_5)^2}\]
we obtain
\[Det=-\frac{\nabla^2A_1^{\frac{n^2-5n+14}{2}}}{(\alpha_3-\alpha_5)^5(\alpha_3\alpha_5-\alpha_2\alpha_6)^2},\]
\[\alpha_1^\prime=\alpha_2^\prime=\alpha_5^\prime=1, \quad \alpha_3^\prime=\alpha_4^\prime=0, \quad \alpha_6^\prime=\frac{\alpha_2\alpha_6-\alpha_3\alpha_5}{(\alpha_3-\alpha_5)^2}\neq0. \]
So, we have $\mathcal{A}(1,1,0,0,1,\delta)$ for $\delta\neq0$.
\end{proof}

\textbf{Case $\mathbf{r_1=1, r_2=2}$.} Let $r_1=1, r_2=2$ and $n>6$ then we have
\[\mathcal{A}_1= \langle e_1,e_{n-2}\rangle, \quad \mathcal{A}_2= \langle e_2,e_{n-1},e_n\rangle, \quad \mathcal{A}_i= \langle e_i\rangle, \quad 3\leq i\leq n-3.\]

Moreover, we have the products which given in (\ref{for1}) and we can assume
\[\begin{array}{ll}
e_{n-2}e_1=\alpha_1e_1+\beta_1e_{n-1}+\beta_2e_n, & e_ne_1=\alpha_6e_3, \\
e_{n-2}e_{n-2}=\alpha_2e_1+\beta_3e_{n-1}+\beta_4e_n, & e_ne_{n-2}=\alpha_7e_3, \\
e_{n-2}e_{n-1}=\alpha_3e_3, & e_ne_{n-1}=\alpha_8e_3,\\
e_{n-2}e_n=\alpha_4e_3, & e_ne_n=\alpha_9e_3.\\
e_{n-1}e_1=\alpha_5e_3
\end{array}\]

From the equalities
\[\begin{array}{l}
0=(e_1e_{n-1})e_1=e_1(e_{n-1}e_1)=\alpha_5e_1e_3=\alpha_5e_4, \\
0=e_{n-2}(e_{n-1}e_1)=(e_{n-2}e_{n-1})e_1=\alpha_3e_3e_1=\alpha_3e_4, \\
0=(e_1e_n)e_1=e_1(e_ne_1)=\alpha_5e_1e_3=\alpha_6e_4, \\
0=e_{n-2}(e_ne_1)=(e_{n-2}e_n)e_1=\alpha_4e_3e_1=\alpha_4e_4,\\
0=(e_1e_n)e_{n-2}=e_1(e_ne_{n-2})=\alpha_7e_1e_3=\alpha_7e_4, \\
0=(e_1e_n)e_{n-1}=e_1(e_ne_{n-1})=\alpha_8e_1e_3=\alpha_8e_4, \\
0=(e_1e_n)e_n=e_1(e_ne_n)=\alpha_9e_1e_3=\alpha_9e_4
\end{array}\]
we obtain that
\[\alpha_3=\alpha_4=\alpha_5=\alpha_6=\alpha_7=\alpha_8=\alpha_9=0\]
follows we have
\[e_{n-2}e_{n-1}=e_{n-2}e_n=e_{n-1}e_1=e_ne_1=e_ne_{n-2}=e_ne_{n-1}=e_ne_n=0.\]
Let's consider
\[e_{n-1}e_{n-2}=(e_1e_{n-2})e_{n-2}=e_1(e_{n-2}e_{n-2})=\alpha_2e_1e_1+\beta_3e_1e_{n-1}+\beta_4e_1e_n=\alpha_2e_2,\]
but by the graduation rule  $e_{n-1}e_{n-2}\in\langle e_3\rangle$. Thus, $e_{n-1}e_{n-2}=0$ and  $\alpha_2=0$.

Now from the next chain of equalities
\[0=e_{n-2}e_{n-1}=e_{n-2}(e_1e_{n-2})=(e_{n-2}e_1)e_{n-2}=(\alpha_1e_1+\beta_1e_{n-1}+\beta_2e_n)e_{n-2}=\alpha_1e_{n-1},\]
we deduce that $\alpha_1=0$.

By induction and associativity low it is easy to proof that
\[e_{n-1}e_i=e_ne_i=e_{n-2}e_j=0, \quad 1\leq i\leq n-3, \quad 2\leq j\leq n-3.\]

Analogously, from identities
\[\begin{array}{c}
e_{n-1}e_{n-1}=e_{n-1}(e_1e_{n-2})=(e_{n-1}e_1)e_{n-2}=0, \\
e_{n-1}e_n=(e_1e_{n-2})e_n=e_1(e_{n-2}e_n)=0 \end{array}\]
we get  $e_{n-1}e_{n-1}=e_{n-1}e_n=0$.

Follows, we have the following family:

\begin{equation}\label{for3}\mathcal{A}(\beta_1,\beta_2,\beta_3,\beta_4): \left\{\begin{array}{ll}
e_ie_j=e_{i+j}, \quad 2\leq i+j\leq n-3,\\
e_1e_{n-2}=e_{n-1},\\
e_{n-2}e_1=\beta_1e_{n-1}+\beta_2e_n, \\
e_{n-2}e_{n-2}=\beta_3e_{n-1}+\beta_4e_n, \text{with} \  \ (\beta_2,\beta_4)\neq(0,0) \\
\end{array}\right.\end{equation}
where the omitted products are vanish.

\begin{teo}\label{teo1} An arbitrary associative algebra of the family $\mathcal{A}(\beta_1,\beta_2,\beta_3,\beta_4)$ is isomorphic to one of the following pairwise non-isomorphic algebras:
\[
\begin{array}{llll}
\mathcal{A}(0,1,0,0),& \mathcal{A}(0,1,1,0),& \mathcal{A}(1,0,0,1), & \mathcal{A}(\beta,1,0,1), \\
\end{array}\]
where  $\beta \in \mathds{C}. $
\end{teo}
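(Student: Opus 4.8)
The plan is to follow closely the method used for Theorem~\ref{teo}. Since $\mathcal{A}$ is naturally graded, it is enough to consider graded changes of generators, i.e.\ a change of basis inside $\mathcal{A}_1=\langle e_1,e_{n-2}\rangle$ together with a choice of the remaining basis vector of $\mathcal{A}_2=\langle e_2,e_{n-1},e_n\rangle$:
\[
e_1^\prime=A_1e_1+A_2e_{n-2},\qquad e_{n-2}^\prime=B_1e_1+B_2e_{n-2},\qquad e_n^\prime=C_1e_2+C_2e_{n-1}+C_3e_n,
\]
with the rest of the new basis forced by $e_i^\prime=(e_1^\prime)^i$ for $2\le i\le n-3$ and $e_{n-1}^\prime=e_1^\prime e_{n-2}^\prime$. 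First I would demand that the new basis is again adapted and that the new products again have the form \eqref{for3}; expanding $e_1^\prime e_n^\prime$ and $e_1^\prime e_{n-1}^\prime$ with the help of \eqref{for1} and \eqref{for3} yields $C_1=0$ and $B_1=0$, together with the nondegeneracy condition $A_1B_2(PC_3-QC_2)\ne0$, where I abbreviate $P=A_1+A_2\beta_3$ and $Q=A_2\beta_4$; the condition $A_1\ne0$ is precisely $C(e_1^\prime)=(n-3,2,1)$.

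Next I would compute the transformed parameters by rewriting $e_{n-2}^\prime e_1^\prime$ and $e_{n-2}^\prime e_{n-2}^\prime$ in the new basis. The output is a system of rational formulas for $\beta_1^\prime,\dots,\beta_4^\prime$, from which two facts deserve to be isolated: first,
\[
\beta_4^\prime=\frac{A_1B_2^2\beta_4}{PC_3-QC_2},
\]
so that $\beta_4=0$ is an isomorphism invariant; second, when $\beta_4\ne0$, after normalizing $\beta_3^\prime=0$ — achieved by the choice $C_2=C_3\beta_3/\beta_4$, which makes $PC_3-QC_2=A_1C_3$ — one gets $\beta_1^\prime=\beta_1-\beta_2\beta_3/\beta_4$, independent of every parameter. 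Thus $\beta_4=0$, the scalar $\mu:=\beta_1-\beta_2\beta_3/\beta_4$ in the case $\beta_4\ne0$, and commutativity of $\mathcal{A}$ are all invariants.

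Then I would split into cases. If $\beta_4=0$, then $(\beta_2,\beta_4)\ne(0,0)$ forces $\beta_2\ne0$, and $\beta_3^\prime=B_2\beta_3/(A_1+A_2\beta_3)$ shows that the vanishing of $\beta_3$ is a further invariant; suitable choices of $A_2,B_2,C_3,C_2$ rescale $\beta_2$ to $1$ and clear $\beta_1$, giving $\mathcal{A}(0,1,0,0)$ when $\beta_3=0$ and $\mathcal{A}(0,1,1,0)$ when $\beta_3\ne0$. If $\beta_4\ne0$, I first normalize $\beta_3^\prime=0$; then $\beta_1^\prime=\mu$, and scaling $C_3=B_2^2\beta_4$ makes $\beta_4^\prime=1$ while $\beta_2^\prime$ becomes proportional to $A_1\beta_2+A_2(\beta_2\beta_3+\beta_4-\beta_1\beta_4)$. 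When $\mu\ne1$ this expression is nonzero for a suitable value of $A_2$, so $\beta_2^\prime$ can be made $1$ and one reaches $\mathcal{A}(\mu,1,0,1)$; when $\mu=1$ it reduces to $A_1\beta_2$, which is identically $0$ if $\beta_2=0$ (yielding $\mathcal{A}(1,0,0,1)$) and otherwise can be normalized to $1$ (yielding $\mathcal{A}(1,1,0,1)$, the member $\beta=1$ of the family $\mathcal{A}(\beta,1,0,1)$). For pairwise non-isomorphism I would use the invariants above: $\beta_4=0$ separates the first two algebras from the last two; among the first two, $\beta_3\ne0$ (equivalently, the exceptional locus $\{x\in\mathcal{A}_1:C(x)\ne(n-3,2,1)\}$ is a union of two lines rather than the single line $\langle e_{n-2}\rangle$) singles out $\mathcal{A}(0,1,1,0)$, and on that single line $x\mapsto x^2$ vanishes only for $\mathcal{A}(0,1,0,0)$; $\mathcal{A}(1,0,0,1)$ is the only commutative algebra in the list; and $\mu$ distinguishes the members of $\mathcal{A}(\beta,1,0,1)$.

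I expect the main obstacle to be the bookkeeping in the case $\beta_4\ne0$: once $\beta_3^\prime=0$ has been achieved, preserving it forces $C_2=0$, which removes most of the remaining freedom, so the order of the successive normalizations must be chosen with care; and because $\mathcal{A}(1,0,0,1)$ and $\mathcal{A}(1,1,0,1)$ share the invariant value $\mu=1$, their non-isomorphism requires the separate remark that, starting from $\mathcal{A}(1,0,0,1)$, the coefficient $\beta_2^\prime$ stays $0$ — equivalently, $\mathcal{A}(1,0,0,1)$ is commutative and $\mathcal{A}(1,1,0,1)$ is not.
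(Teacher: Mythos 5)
Your proposal is correct and follows exactly the route the paper intends: the paper's proof of Theorem~\ref{teo1} is only the remark that one applies the same arguments as in Theorem~\ref{teo}, and your write-up is a faithful execution of that method (generic graded change of generators, the transformation formulas — your expressions for $\beta_4^\prime$, for $\beta_2^\prime$, and the invariance of $\beta_4=0$, of $\beta_3=0$ when $\beta_4=0$, and of $\mu=\beta_1-\beta_2\beta_3/\beta_4$ all check out), yielding precisely the four representatives in the statement. The only added value over the paper is that you actually supply the details, including the commutativity argument separating $\mathcal{A}(1,0,0,1)$ from $\mathcal{A}(1,1,0,1)$, which the paper leaves implicit.
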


\begin{proof} The proof is carried out by applying similar arguments to those used in Theorem \ref{teo}.
\end{proof}

\textbf{Case $\mathbf{r_1=1, r_2\geq3}$.}\begin{pr}\label{pr1}
Let $\mathcal{A}$ be naturally graded nilpotent associative algebra with characteristic sequence $(n-3,2,1)$ for $n>6$ and $r_1=1, r_2\geq3$. Then, there are no such type naturally graded associative algebras.
\end{pr}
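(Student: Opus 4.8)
The plan is to derive a contradiction from the hypotheses $r_1=1$, $r_2\ge 3$ by showing that every graded component $\mathcal{A}_i$ with $i\ge 3$ is forced to be one-dimensional, which is incompatible with $\dim\mathcal{A}=n$.

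First I would normalise the set-up. Since $\mathcal{A}$ is naturally graded we may assume $\mathcal{A}=\gr\mathcal{A}=\bigoplus_{i=1}^{n-3}\mathcal{A}_i$, so that $\mathcal{A}$ is honestly graded with $\mathcal{A}_i=\mathcal{A}_{i-1}\mathcal{A}_1$ for $2\le i\le n-3$ and $\mathcal{A}_i=0$ for $i>n-3$ (the nilindex being $n-3$). Reading off which basis vector lies in which component, the assumptions $r_1=1$ and $3\le r_2\le n-3$ give $\mathcal{A}_1=\langle e_1,e_{n-2}\rangle$, $\mathcal{A}_2=\langle e_2,e_{n-1}\rangle$, $e_i\in\mathcal{A}_i$ for $3\le i\le n-3$, and $e_n\in\mathcal{A}_{r_2}$; in particular $\dim\mathcal{A}_1=\dim\mathcal{A}_2=2$ while $\mathcal{A}_{r_2}$ contains the two independent vectors $e_{r_2}$ and $e_n$.

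The heart of the argument is the computation of $\mathcal{A}_3=\mathcal{A}_2\mathcal{A}_1$. Since $e_{n-2}e_1$ and $e_{n-2}e_{n-2}$ lie in $\mathcal{A}_2=\langle e_2,e_{n-1}\rangle$, write $e_{n-2}e_1=\lambda_1 e_2+\mu_1 e_{n-1}$ and $e_{n-2}e_{n-2}=\lambda_2 e_2+\mu_2 e_{n-1}$. Using $(\ref{for1})$ together with $e_{n-1}=e_1e_{n-2}$ and $e_1e_{n-1}=0$, the four spanning products become $e_2e_1=e_3$, $e_2e_{n-2}=0$, $e_{n-1}e_1=(e_1e_{n-2})e_1=e_1(e_{n-2}e_1)=\lambda_1 e_3$, and $e_{n-1}e_{n-2}=(e_1e_{n-2})e_{n-2}=e_1(e_{n-2}e_{n-2})=\lambda_2 e_3$; hence $\mathcal{A}_3=\langle e_3\rangle$. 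Then a one-line induction using $(\ref{for1})$ (namely $e_{i-1}e_1=e_i$ and $e_{i-1}e_{n-2}=0$) gives $\mathcal{A}_i=\mathcal{A}_{i-1}\mathcal{A}_1=\langle e_i\rangle$ for all $3\le i\le n-3$. Consequently $\dim\mathcal{A}=\dim\mathcal{A}_1+\dim\mathcal{A}_2+\sum_{i=3}^{n-3}\dim\mathcal{A}_i=2+2+(n-5)=n-1$, contradicting $\dim\mathcal{A}=n$; equivalently, $\mathcal{A}_{r_2}$ cannot contain both $e_{r_2}$ and $e_n$. This shows there is no such algebra.

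The only step that is not completely mechanical is the degree-$3$ computation, since a priori the "non-standard" products $e_{n-2}e_1$ and $e_{n-2}e_{n-2}$ could contribute directions outside $\langle e_3\rangle$; the resolution is the reassociation $e_{n-1}e_x=(e_1e_{n-2})e_x=e_1(e_{n-2}e_x)$ combined with $e_1e_{n-1}=0$ from $(\ref{for1})$. Once degree $3$ is pinned down, all higher components are forced and the dimension count closes immediately, so I expect no further obstacle.
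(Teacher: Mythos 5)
Your proof is correct and follows essentially the same route as the paper: both arguments use the reassociations $e_{n-1}x=(e_1e_{n-2})x=e_1(e_{n-2}x)$ together with $e_1e_{n-1}=0$ and the products in (\ref{for1}) to force every product out of $\mathcal{A}_2\mathcal{A}_1$ into $\langle e_3\rangle$, and then induct upward. The only cosmetic difference is that you phrase the final contradiction as a dimension count ($\dim\gr\mathcal{A}=n-1\neq n$), whereas the paper phrases it as the impossibility of ever generating the vector $e_n$ in degree $r_2\geq 3$; these are the same obstruction.
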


\begin{proof}
Let $r_1=1, r_2\geq3$ and $n>6$ then we have
\[\mathcal{A}_1= \langle e_1,e_{n-2}\rangle, \quad \mathcal{A}_2= \langle e_2,e_{n-1}\rangle, \quad \dots \quad \mathcal{A}_{r_2}= \langle e_{r_2},e_n\rangle,\quad \dots \quad \mathcal{A}_{n-3}= \langle e_{n-3}\rangle. \]


Let us suppose that
\[e_{n-2}e_1=\alpha_1e_2+\alpha_2e_{n-1}, \quad e_{n-2}e_{n-2}=\alpha_3e_2+\alpha_4e_{n-1}.\]

From the next chain of equalities
\[\begin{array}{c}
0=(e_1e_{n-1})e_1=e_1(e_{n-1}e_1)=e_1((e_1e_{n-2})e_1)=e_1(e_1((e_{n-2}e_1)))=\\
e_2(e_{n-2}e_1)=e_2(\alpha_1e_2+\alpha_2e_{n-1})=\alpha_1e_4,\\
0=(e_2e_{n-2})e_{n-2}=e_2(e_{n-2}e_{n-2})=e_2(\alpha_3e_2+\alpha_4e_{n-1})=\alpha_3e_4
\end{array}\]
we obtain $\alpha_1=\alpha_3=0$, follows
\[e_{n-2}e_1=\alpha_2e_{n-1}, \quad e_{n-2}e_{n-2}=\alpha_4e_{n-1}.\]

Analogously from
\[\begin{array}{c}
e_{n-1}e_{n-2}=(e_1e_{n-2})e_{n-2}=e_1(e_{n-2}e_{n-2})=\alpha_4e_1e_{n-1}=0, \\
e_{n-2}e_{n-1}=e_{n-2}(e_1e_{n-2})=(e_{n-2}e_1)e_{n-2}=\alpha_2e_{n-1}e_{n-2}=0, \\
e_{n-1}e_1=(e_1e_{n-2})e_1=e_1(e_{n-2}e_1)=\alpha_2e_1e_{n-1}=0,\\
e_{n-2}e_2=e_{n-2}(e_1e_1)=(e_{n-2}e_1)e_1=\alpha_2e_{n-1}e_1=0,\\
e_{n-1}e_{n-1}=(e_1e_{n-2})e_{n-1}=e_1(e_{n-2}e_{n-1})=0
\end{array}\]
we have
\[e_{n-1}e_{n-2}=e_{n-2}e_{n-1}=e_{n-1}e_1=e_{n-2}e_2=e_{n-1}e_{n-1}=0.\]

By induction
\[e_{n-2}e_i=e_{n-2}(e_2e_{i-2})=(e_{n-2}e_2)e_{i-2}=0, \quad 3\leq i\leq n-3\]
we proof that
\[e_{n-2}e_i=0, \quad 3\leq i\leq n-3.\]

Similarly, we can show that
\[e_{n-1}e_i=0, \quad 2\leq i\leq n-3.\]

Summarizing, we can conclude that $e_{n-2}e_i, e_ie_{n-2}, e_{n-1}e_i, e_ie_{n-2}\in\langle e_{n-1}\rangle$ for $1\leq i\leq n-1$. Thus, it is not possible to obtain the element $e_n$ and it contradicts the gradation. Therefore, there is no any naturally graded associative algebra in this case.
\end{proof}

\textbf{Case $\mathbf{r_1\geq2, r_2=1}$.}\begin{pr} Let $\mathcal{A}$ be naturally graded nilpotent associative algebra with characteristic sequence $(n-3,2,1)$ for $n>6$ and $r_1\geq2, r_2=1$. Then, there are no such type naturally graded associative algebras.
\end{pr}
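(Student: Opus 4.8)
The plan is to derive a contradiction in the spirit of Proposition~\ref{pr1}, by showing that the extra generator $e_n$ contributes nothing new to the gradation: its left action is forced to vanish. Since $r_2=1$ and $r_1\geq 2$, we have $\mathcal{A}_1=\langle e_1,e_n\rangle$, while $e_{n-2}\in\mathcal{A}_{r_1}$ and $e_{n-1}\in\mathcal{A}_{r_1+1}$, so that $2\leq r_1\leq n-4$ (the upper bound because $r_1+1\leq n-3$); as every basis vector is homogeneous this gives $\mathcal{A}_2\subseteq\langle e_2,e_{n-2}\rangle$. Because $e_ne_1$ and $e_ne_n$ lie in $\mathcal{A}_1\mathcal{A}_1\subseteq\mathcal{A}_2$, write $e_ne_1=a_1e_2+b_1e_{n-2}$ and $e_ne_n=a_2e_2+b_2e_{n-2}$. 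The first step exploits the relation $e_1e_n=0$ from (\ref{for1}) together with associativity: since $e_1e_2=e_3$, $e_1e_{n-2}=e_{n-1}$ and $e_3\neq e_{n-1}$ for $n>6$, the map $L_{e_1}$ is injective on $\langle e_2,e_{n-2}\rangle$, and from $(e_1e_n)e_1=e_1(e_ne_1)$ and $(e_1e_n)e_n=e_1(e_ne_n)$ one reads off $0=a_1e_3+b_1e_{n-1}$ and $0=a_2e_3+b_2e_{n-1}$, whence $a_1=b_1=a_2=b_2=0$; that is, $e_ne_1=e_ne_n=0$.

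Next I would feed this vanishing back into the graded structure. Since $\mathcal{A}_1$ generates $\mathcal{A}$ we have $\mathcal{A}_k=\mathcal{A}_1\mathcal{A}_{k-1}$ for every $k\geq 2$. From $e_1e_n=0$ and $e_ne_1=e_ne_n=0$ we get $\mathcal{A}_2=\mathcal{A}_1\mathcal{A}_1=\langle e_2\rangle$, and then, by induction on $k$ using $e_1e_{k-1}=e_k$ together with $e_ne_{k-1}=(e_ne_1)e_{k-2}=0$ for $3\leq k\leq n-3$, one obtains $\mathcal{A}_k=\langle e_k\rangle$ for all $2\leq k\leq n-3$. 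Hence $\dim\mathcal{A}=\dim\mathcal{A}_1+\sum_{k=2}^{n-3}\dim\mathcal{A}_k\leq 2+(n-4)=n-2<n$, which is impossible; equivalently, $e_{n-2}\in\mathcal{A}_{r_1}=\langle e_{r_1}\rangle$ cannot hold because $e_{n-2}$ and $e_{r_1}$ are distinct basis vectors. Therefore no naturally graded nilpotent associative algebra with characteristic sequence $(n-3,2,1)$, $r_1\geq 2$ and $r_2=1$ exists.

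I do not anticipate a genuine obstacle here: the only delicate point is the index bookkeeping enforced by $n>6$ and $2\leq r_1\leq n-4$, which guarantees that $e_1e_2=e_3$, $e_1e_{k-1}=e_k$ and $e_1e_{n-2}=e_{n-1}$ are all valid instances of (\ref{for1}) and that $e_3,e_{n-1},e_{r_1},e_{n-2}$ are pairwise distinct basis elements. Conceptually, the decisive feature is that for $r_1\geq2$ the $J_2$-block relation $e_1e_{n-2}=e_{n-1}$ makes $L_{e_1}$ injective on $\mathcal{A}_2$, and this, combined with $e_1e_n=0$, collapses the entire left action of $e_n$ — unlike the case $r_1=r_2=1$ of Theorem~\ref{teo}, where $e_{n-1}$ already belongs to $\mathcal{A}_2$ and no such collapse takes place.
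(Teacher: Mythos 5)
Your argument is correct and follows essentially the route the paper intends: the paper omits the details, deferring to the method of Proposition~\ref{pr1}, and your writeup is a valid instantiation of that method — use associativity identities such as $(e_1e_n)e_1=e_1(e_ne_1)$ together with $e_1e_n=0$ and the injectivity of $L_{e_1}$ on $\langle e_2,e_{n-2}\rangle$ to force $e_ne_1=e_ne_n=0$, and then conclude from $\mathcal{A}_{k}=\mathcal{A}_1\mathcal{A}_{k-1}=\langle e_k\rangle$ that $e_{n-2}\in\mathcal{A}_{r_1}$ is impossible, contradicting the gradation. The index bookkeeping ($3\neq n-1$ for $n>6$, $2\leq r_1\leq n-4$) is handled correctly, so no gap remains.
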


\begin{proof} The proof follows by the similar way as the proof of Proposition \ref{pr1}.
\end{proof}


\begin{thebibliography}{99}
\bibitem{ad} Adashev J.Q., Camacho L.M., Gomez-Vidal S., Karimjanov I.A. (2014). Naturally graded Zinbiel algebras with nilindex $n-3$. Linear algebra appl. 443, 86--104. DOI: 10.1016/j.laa.2013.11.021

\bibitem{red}  Camacho L.M., Canete E.M., Gomez J.R., Redjepov Sh.B., (2013). Leibniz algebras of nilindex $n-3$ with characteristic sequence $(n-3,2,1)$. Linear algebra appl. 438, 1832--1851. DOI: 10.1016/j.laa.2011.05.037


\bibitem{Graaf}
 De Graaf W. A. (2018). Classification of nilpotent associative algebras of small dimension.  Int. J. Algebra Comput.  28~(1), 133--161.DOI: 10.1142/S0218196718500078

 \bibitem{Eick}
 Eick B., Moede, T. (2015). Nilpotent associative algebras and coclass theory. J. Algebra
434, 249--260. DOI: 10.1016/j.jalgebra.2015.03.027

 \bibitem{Eick2}
  Eick B., Moede, T. (2017). Coclass theory for finite nilpotent associative algebras: algorithms and a periodicity conjecture. Exp. Math. 26~(3)   267--274. DOI: 10.1080/10586458.2016.1162229

\bibitem{el} Elduque A., Kochetov M., Rodrigo-Escudero A. (2022). Gradings on associative algebras with involution and real forms of classical simple Lie algebras. J. Algebra, 590, 61--138. DOI: 0.1016/j.jalgebra.2021.10.006


\bibitem{Haz}
Hazlett O. C. (1916). On the classification and invariantive characterization of
nilpotent algebras. Amer. J. Math. 38~(2),  109--138. DOI: 10.2307/2370262

\bibitem{kar}  Karimjanov I. A. (2021). The classification of 5-dimensional complex nilpotent associative algebras. Commun. Algebra 49~(3), 915–-931. DOI: 10.1080/00927872.2020.1822371

\bibitem{kar4}  Karimjanov I. A. Naturally graded p-filiform associative algebras, Arxiv:2412.03964v1, 2024

\bibitem{kar2} Karimjanov I. A., Kaygorodov I., Ladra M. (2021). Central extensions of filiform associative algebras. Lin. Multilin. Algebra  69~(6), 1083--1101. DOI: 10.1080/03081087.2019.1620674


\bibitem{kar1} Karimjanov I. A. and Ladra M. (2020). Some classes of nilpotent associative algebras. Mediterr. J. Math.  17:70. DOI: 10.1007/s00009-020-1504-x

\bibitem{kar3} Karimjanov I. A. (2023). Classification of naturally graded nilpotent associative algebras. Commun. Algebra 51~(5), 1969–-1981. DOI: 10.1080/00927872.2022.2148257

\bibitem{kay} Kaygorodov I., Rakhimov I., Said Husain Sh. K. (2020). The algebraic classification of nilpotent associative commutative algebras. J. Algebra and its Appl. 19~(11), 2050220. DOI: 10.1142/S0219498820502205

 \bibitem{Kr}
Kruse R. L., Price D. T. (1969). Nilpotent rings. Gordon and Breach Science Publishers, New York-London-Paris

\bibitem{Maz1}
Mazzolla G. (1979). The algebraic and geometric classification of associative algebras of dimension five. Manuscr. Math. 27~(1), 81--101. DOI: 10.1007/BF01297739


\bibitem{Maz2}
Mazzolla G. (1980). Generic finite schemes and Hochschild cocycles. Comment. Math. Helv.  55~(2),  267--293. DOI: 10.1007/BF02566686



\bibitem{Poo}
Poonen, B. (2008). Isomorphism types of commutative algebras of finite rank over an algebraically
closed field. In: Computational Arithmetic Geometry, Contemporary Mathematics,
AMS Providence, RI, pp. 111--120. DOI: 10.1090/conm/463/09050

\bibitem{sil} Siles Molina, Mercedes. (2006). Finite Z-gradings of simple associative algebras. J. Pure Appl. Algebra 207(3), 619–630. DOI: 10.1016/j.jpaa.2005.10.005

\bibitem{sm} Smirnov, O. N. (1997). Simple associative algebras with finite Z-grading. J. Algebra 196(1), 171–184. DOI: 10.1006/jabr.1997.7087



%
%
%
%
%
%
%
%
%
%
%
%
%
%
%
%
%
%
%
%
%
%
%
%
%
%








\end{thebibliography}
\end{document}